
\documentclass[10pt,oneside]{amsart}
\usepackage{amscd,amssymb, bbm,color}
\usepackage{graphicx, amsmath, amssymb, amscd, amsthm, euscript, psfrag, amsfonts,bm}

\usepackage{amscd,amssymb}

\usepackage[all,arc]{xy}
\usepackage{enumerate}
\usepackage{mathrsfs}
\usepackage{amsmath}
\usepackage{graphicx}
\usepackage{caption}
\usepackage{subcaption}
\usepackage{amssymb}

\newtheorem{thm}{Theorem}[section]

\newtheorem{lem}[thm]{Lemma}

\theoremstyle{definition}
\newtheorem{defn}[thm]{Definition}

\newtheorem{exmp}[thm]{Example}

\theoremstyle{remark}

\newcommand{\be}{begin{equation}}

\newcommand{{\grinv}}{{\Cal G}^{-r}}

\newcommand{\Cal}{\mathcal}

\newcommand{\bp}{\begin{pmatrix}}
\newcommand{\ep}{\end{pmatrix}}

\renewcommand{\bp}{{\rm bp}}

\newcommand{\T}{\operatorname{T}}




\newcommand{\op}{\operatorname}

\renewcommand{\be}{\begin{equation}}
\newcommand{\ee}{\end{equation}}

\renewcommand{\T}{\op{T}}

\newcommand{\Hom}{\op{H}}

\newcommand{\un}{\underline}

\makeatletter
\let\c@equation\c@thm
\makeatother
\numberwithin{equation}{section}
\setcounter{tocdepth}{1}
\bibliographystyle{plain}

\title[Local Mixing on abelian covers] {Local Mixing on abelian covers of hyperbolic surfaces with cusps}





\author{Wenyu Pan}
\address{Pennsylvania State University, State College, PA, 16802,  USA}
\email{wup60@psu.edu}
\address{Current address: Department of Mathematics, University of Chicago, Chicago, IL, 60637}
\email{wenyu@math.uchicago.edu}

\begin{document}

\maketitle

\begin{abstract} 
We prove the local mixing theorem for geodesic flows on abelian covers of finite volume hyperbolic surfaces with cusps, which is a continuation of the work \cite{OP}. We also describe applications to counting problems and the prime geodesic theorem. 
 \end{abstract}

\section{Introduction}
\subsection{Setting and main results}

Let $M$ be a hyperbolic surface. We can present $M$ as the quotient $\Gamma\backslash \mathbb{H}^2$ of the hyperbolic $2$-space for some torsion-free discrete subgroup $\Gamma$ in $G=\operatorname{PSL}_2(\mathbb{R})$. The unit tangent bundle $\T^1(M)$ is isomorphic to $\Gamma\backslash G$ and the geodesic flow on $\operatorname{T}^1(M)$ corresponds to the right translation action of $a_t=\begin{pmatrix} e^{t/2} & 0 \\ 0 & e^{-t/2}\end{pmatrix}$ on $\Gamma\backslash G$.

Set $\mathsf{m}^{\operatorname{Haar}}_{\Gamma}$ to be the $G$-invariant measure on $\Gamma\backslash G$, which (up to a scalar) equals the hyperbolic volume measure on $\T^1(M)$ when identifying $\Gamma\backslash G$ with $\T^1(M)$. When there is no ambiguity about the group $\Gamma$, we write $\mathsf{m}^{\operatorname{Haar}}$ for simplicity. For any $\psi_1,\psi_2\in C_c(\Gamma\backslash G)$, consider the matrix coefficient 
\begin{equation*}
\langle a_t\cdot \psi_1,\psi_2\rangle=\int_{\Gamma\backslash G} \psi_1(xa_t) \psi_2(x)d\mathsf{m}^{\operatorname{Haar}}(x).
\end{equation*}
One of the central questions in homogeneous dynamics is what we can say about the asymptotic behavior of $\langle a_t\cdot \psi_1,\psi_2\rangle$ as $t\to \infty$. When $\mathsf{m}^{\operatorname{Haar}}(\Gamma\backslash G)<\infty$, it is well-known that 
\begin{equation*}
\lim_{t\to \infty} \langle a_t\cdot \psi_1,\psi_2\rangle=\frac{1}{\mathsf{m}^{\operatorname{Haar}}(\Gamma\backslash G)} \mathsf{m}^{\operatorname{Haar}}(\psi_1) \mathsf{m}^{\operatorname{Haar}}(\psi_2).
\end{equation*}
So the geodesic flow on $\operatorname{T}^1(\Gamma\backslash \mathbb{H}^2)$ satisfies the \emph{strong mixing property}. When $\mathsf{m}^{\operatorname{Haar}}(\Gamma\backslash G)=\infty$, we have
\begin{equation*}
\lim_{t\to \infty}\langle a_t\cdot \psi_1,\psi_2\rangle=0
\end{equation*}
(see for example \cite{HM}).
In view of this, the question of understanding $\langle a_t\cdot \psi_1,\psi_2 \rangle$ in infinite volume setting can be formulated more precisely as whether there exists a renormalization function $\alpha:\mathbb{R}\to \mathbb{R}_{>0}$ such that for any $\psi_1,\psi_2$, we have
\begin{equation*}
\alpha(t)\langle a_t\cdot \psi_1,\psi_2\rangle\to \text{non-trivial}\,\,\,\text{as}\,\,\,t\to \infty.
\end{equation*}
If such renormalization function exists, the $a_t$-action on $\Gamma\backslash G$ (or the geodesic flow on $\operatorname{T}^1(\Gamma\backslash \mathbb{H}^2)$) is said to satisfy the \emph{local mixing property}, a property introduced in \cite{OP} to substitute the strong mixing property in infinite volume setting.

In this paper, we investigate the matrix coefficients/ local mixing property of the geodesic flows on \emph{abelian covers of finite volume hyperbolic surfaces with cusps}.  It is a follow-up work of \cite{OP} and the extensive study by Ledrappier and Sarig \cite{LS} on this kind of surfaces has laid a solid foundation for us. Throughout the paper, set $\Gamma_0$ to be a torsion-free \emph{non-uniform} lattice in $G$ and 
\begin{equation*}
\Gamma\triangleleft\Gamma_0
\end{equation*}
to be a normal subgroup with $\mathbb{Z}^d$-quotient. Then $M=\Gamma\backslash \mathbb{H}^2$ is a regular cover of $M_0=\Gamma_0\backslash \mathbb{H}^2$ whose group of deck transformations is isomorphic to $\mathbb{Z}^d$. Our main result is as follows:
\begin{thm}
\label{main thm}
For any $\psi_1,\psi_2\in C_c(\Gamma\backslash G)$,
\begin{equation*}
\lim_{t\to +\infty}t^{p+h/2}\int_{\Gamma\backslash G} \psi_1(xa_t)\psi_2(x)d\mathsf{m}^{\operatorname{Haar}}(x)=\mathsf{c}\,\mathsf{m}^{\operatorname{Haar}}(\psi_1)\mathsf{m}^{\operatorname{Haar}}(\psi_2),
\end{equation*}
where $p,h\in \mathbb{N}$ and $\mathsf{c}>0$ are some constants given in (\ref{con 1}) and (\ref{con 2}) and $p+h=d$.
\end{thm}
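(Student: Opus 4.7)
The approach is to realise $\Gamma\backslash G$ as a principal $\mathbb{Z}^d$-bundle over $\Gamma_0\backslash G$ (with deck action coming from $\Gamma_0/\Gamma\cong\mathbb{Z}^d$), unfold the matrix coefficient along the fibres, and then invoke a joint local limit theorem for the associated cover cocycle built on the work of Ledrappier--Sarig \cite{LS}. Fix a Borel fundamental domain $F_0\subset\Gamma\backslash G$ for the $\mathbb{Z}^d$-action, which we identify with $\Gamma_0\backslash G$, giving a measurable identification $\Gamma\backslash G\cong(\Gamma_0\backslash G)\times\mathbb{Z}^d$. Under this identification the $a_t$-flow becomes the skew product $(x,n)\cdot a_t=(xa_t,\,n+c(x,t))$, where the deck cocycle $c\colon(\Gamma_0\backslash G)\times\mathbb{R}\to\mathbb{Z}^d$ records the deck translation needed to return to $F_0$ after time $t$. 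Writing $\psi_i(x,n)$ for the restriction of $\psi_i$ to the $n$-th fibre (which vanishes for all but finitely many $n$ by compact support), the matrix coefficient unfolds as
\begin{equation*}
\langle a_t\cdot\psi_1,\psi_2\rangle\ =\ \int_{\Gamma_0\backslash G}\sum_{n\in\mathbb{Z}^d}\psi_1\bigl(xa_t,\,n+c(x,t)\bigr)\,\psi_2(x,n)\,d\mathsf{m}^{\operatorname{Haar}}_{\Gamma_0}(x).
\end{equation*}

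The heart of the argument is a joint local limit theorem for $c(\cdot,t)$ with respect to $\mathsf{m}^{\operatorname{Haar}}_{\Gamma_0}$, coupled with Howe--Moore mixing on the finite-volume base $\Gamma_0\backslash G$. Using the symbolic coding of the geodesic flow on $M_0$ as a suspension over a countable Markov shift (forced by the cusps), one realises $c$ as a Birkhoff sum of a $\mathbb{Z}^d$-valued H\"older cocycle, and the characters $\chi\in\widehat{\mathbb{Z}^d}\cong\mathbb{T}^d$ furnish a family of twisted transfer operators $\mathcal{L}_\chi$ whose top eigenvalue $\lambda(\chi)$ admits a Puiseux expansion at $\chi=0$ reflecting the heavy-tailed cusp excursions. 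The decomposition $d=p+h$ promised in (\ref{con 1})--(\ref{con 2}) isolates $p$ ``cusp-dominated'' directions, in which $c$ lies in the domain of attraction of a $1$-stable (Cauchy) law with normalisation $t$, and $h$ ``Gaussian'' directions, in which $c$ obeys an ordinary CLT with normalisation $\sqrt{t}$. Inverting an expansion $\lambda(\chi)\sim 1-c_1|\chi_p|-c_2\,\chi_h^{\top}\Sigma\chi_h$ near the origin yields, for each fixed $m\in\mathbb{Z}^d$ and each pair of compactly supported continuous $\phi,\psi$ on $\Gamma_0\backslash G$,
\begin{equation*}
t^{p+h/2}\int_{\Gamma_0\backslash G}\mathbf{1}_{\{c(x,t)=m\}}\,\phi(xa_t)\,\psi(x)\,d\mathsf{m}^{\operatorname{Haar}}_{\Gamma_0}(x)\ \longrightarrow\ \frac{\mathsf{c}_0\,f(0,0)}{\mathsf{m}^{\operatorname{Haar}}_{\Gamma_0}(\Gamma_0\backslash G)}\,\mathsf{m}^{\operatorname{Haar}}_{\Gamma_0}(\phi)\,\mathsf{m}^{\operatorname{Haar}}_{\Gamma_0}(\psi),
\end{equation*}
where $f$ is the joint density at the origin of the product Cauchy $\times$ Gaussian limit law.

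Plugging the displayed asymptotic into the unfolded formula, summing the finite collection of nonzero $n$-terms, and applying the Fubini identity $\mathsf{m}^{\operatorname{Haar}}(\psi_i)=\sum_{n\in\mathbb{Z}^d}\mathsf{m}^{\operatorname{Haar}}_{\Gamma_0}(\psi_i(\cdot,n))$ yields the theorem with $\mathsf{c}$ an explicit positive multiple of $f(0,0)$, matching the formula to be stated in (\ref{con 2}). The main technical obstacle is the joint local limit theorem itself: because the $p$ Cauchy and $h$ Gaussian directions are coupled through a single nonproduct transfer operator, one must control the spectral perturbation of $\mathcal{L}_\chi$ uniformly in $\chi$ across the mixed stable-Gaussian regime, and separately handle $\chi$ bounded away from $0$ using the spectral gap for $\mathcal{L}_\chi$ on an appropriate Banach space; the delicate spectral analysis that powers this is the contribution of \cite{LS}. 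Once the LLT is in place, the passage to local mixing of $a_t$ is a thickening argument of the same type as in \cite{OP}.
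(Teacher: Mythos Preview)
Your proposal is correct in outline and follows essentially the same strategy as the paper: unfold along the $\mathbb{Z}^d$-structure, reduce to a local limit theorem driven by the Ledrappier--Sarig spectral analysis of twisted transfer operators on a countable Markov shift, and then pass to general compactly supported functions by approximation.

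The organizational difference is where you place the boundary between ``input from \cite{LS}'' and ``work to be done.'' You phrase the key input as a joint mixing local limit theorem directly on $\Gamma_0\backslash G$, namely the asymptotic for $\int_{\Gamma_0\backslash G}\mathbf{1}_{\{c(x,t)=m\}}\phi(xa_t)\psi(x)\,d\mathsf{m}^{\operatorname{Haar}}_{\Gamma_0}$. That statement is not what \cite{LS} provides; what they prove is the purely symbolic asymptotic for the sum $\mathsf{Q}_t(\Phi\otimes u)$ (the paper's Theorem~\ref{asym}). The passage from the symbolic estimate to your geometric LLT is exactly the content of the paper's three-case approximation argument (lift to $\tilde X=\Sigma\times\mathbb{Z}^d\times\mathbb{R}$, reduce to one-sided functions, approximate by tensor products in $\mathcal{F}_0$). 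So your ``thickening argument of the same type as in \cite{OP}'' is doing more than you suggest: it is carrying the entire symbolic-to-geometric translation, not just a cosmetic smoothing. Relatedly, Howe--Moore mixing on $\Gamma_0\backslash G$ is not invoked as a separate ingredient; the decay for characters $\chi\neq 0$ and the mixing for $\chi=0$ are both encoded in the spectral picture of $\mathcal{L}_\chi$, and the paper never appeals to Howe--Moore. Finally, your measurable trivialization $\Gamma\backslash G\cong(\Gamma_0\backslash G)\times\mathbb{Z}^d$ produces fibre functions $\psi_i(\cdot,n)$ that are typically discontinuous along $\partial F_0$, which is another reason the paper works in the symbolic suspension model from the outset rather than on the homogeneous space.
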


\begin{exmp} Let $$\Gamma(2)=\left\{\begin{pmatrix} a & b\\ c & d\end{pmatrix}\in \operatorname{PSL}_2(\mathbb{Z}): \begin{pmatrix} a& b\\ c & d\end{pmatrix}\equiv \begin{pmatrix} 1 & 0\\ 0 & 1\end{pmatrix}(\operatorname{mod} 2)\right\},$$ which is  a principal congruent subgroup of the modular group $\operatorname{PSL}_2(\mathbb{Z})$. It is a free group generated by $\begin{pmatrix} 1 & 2\\ 0 & 1\end{pmatrix}$  and $\begin{pmatrix} 1 & 0\\ 2 & 1\end{pmatrix}$.The quotient $\Gamma(2)\backslash \mathbb{H}^2$ is the thrice punctured sphere equipped with the hyperbolic metric. Let $\Gamma\backslash \mathbb{H}^2$ be the homology cover of  $\Gamma(2)\backslash \mathbb{H}^2$, that is to say, the group of deck transformations is isomorphic to $\operatorname{H}_1(\Gamma(2)\backslash \mathbb{H}^2;\mathbb{Z})$. As $\operatorname{H}_1(\Gamma(2)\backslash \mathbb{H}^2;\mathbb{Z})\cong \mathbb{Z}^2$, $\Gamma\backslash \mathbb{H}^2$ is a $\mathbb{Z}^2$-cover of $\Gamma(2)\backslash \mathbb{H}^2$. Applying Theorem \ref{main thm} to $\Gamma\backslash G$, we have $p=2, h=0$ and
\begin{equation*}
\lim_{t\to \infty} t^2\int_{\Gamma\backslash G} \psi_1(xa_t)\psi_2(x)d\mathsf{m}^{\operatorname{Haar}}(x)=\mathsf{c}\,\mathsf{m}^{\operatorname{Haar}}(\psi_1) \mathsf{m}^{\operatorname{Haar}}(\psi_2)
\end{equation*}
for any $\psi_1,\psi_2\in C_c(\Gamma\backslash G)$. 
\end{exmp}

Assume that $M$ is a genus $g$ surface with $t+1$ cusps. In general, when $M$ is a homology cover of $M_0$, we have  $\Gamma\backslash \Gamma_0\cong \operatorname{H}_1(M_0;\mathbb{Z})\cong \mathbb{Z}^{2g+t}$, $h=2g$ and $p=t$.

In finite volume case, strong mixing property of geodesic flow and its effective refinement have been successfully applied to address some counting and equidistribution problems \cite{Ma, EM, DRS}. In abelian cover case, we obtain similar applications using local mixing property.

\subsection{Orbit counting} 

\begin{thm}
For any $x,y\in \mathbb{H}^2$, we have
\begin{equation*}
\#\{\gamma\in \Gamma:\,d(x,\gamma y)<T\}\sim \mathsf{c}\frac{e^T}{T^{p+h/2}}\footnote{We write $f(T)\sim g(T)$ if $\lim_{T\to \infty} f(T)/g(T)=1$.},
\end{equation*}
where $d(\cdot,\cdot)$ is the hyperbolic metric on $\mathbb{H}^2$.
\end{thm}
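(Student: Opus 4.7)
The plan is to deduce the orbit count from the local mixing statement of Theorem \ref{main thm} via the thickening argument of Eskin--McMullen and Duke--Rudnick--Sarnak, adapted to the infinite-volume setting as in \cite{OP}. First I would set up a $KAK$-bi-invariant formulation: fix $o\in\mathbb{H}^2$ with stabilizer $K=\operatorname{PSO}(2)$ and choose $g_x,g_y\in G$ with $g_x o=x$, $g_y o=y$. Since $d(x,\gamma y)=d(o,g_x^{-1}\gamma g_y o)$, the counting quantity equals $\#(g_x^{-1}\Gamma g_y\cap B_T)$ where $B_T=\{g\in G:d(go,o)<T\}=Ka_{[0,T]}K$ up to a bounded correction; under the Cartan decomposition the Haar measure factors as $dg=c\sinh(t)\,dk_1\,dt\,dk_2$, so $\operatorname{vol}(B_T)\sim c_1 e^T$.

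Next I would introduce bump functions to convert the count into an integrated matrix coefficient. Pick a small symmetric neighborhood $U$ of $e\in G$ and nonnegative $\phi_1,\phi_2\in C_c(\Gamma\backslash G)$ of unit $\mathsf{m}^{\operatorname{Haar}}$-mass, supported in the projections of $g_xU$ and $g_yU$. A standard unfolding plus the wavefront property (for large $t$, $a_{-t}Ua_t$ is bi-Lipschitz close to a bounded neighborhood of $K$) shows that, after a $K$-averaging of each $\phi_i$,
\[
(1-\epsilon)\,\#(g_x^{-1}\Gamma g_y\cap B_T)\;\le\;\int_0^T\sinh(t)\,\langle a_t\cdot\phi_1,\phi_2\rangle\,dt\;\le\;(1+\epsilon)\,\#(g_x^{-1}\Gamma g_y\cap B_T)+O(1)
\]
for every sufficiently small $U$ and every sufficiently large $T$.

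Applying Theorem \ref{main thm} gives $\langle a_t\cdot\phi_1,\phi_2\rangle\sim \mathsf{c}\,t^{-(p+h/2)}\mathsf{m}^{\operatorname{Haar}}(\phi_1)\mathsf{m}^{\operatorname{Haar}}(\phi_2)$, and combining with $\sinh(t)\sim\tfrac12 e^t$ together with the elementary Laplace-type asymptotic $\int_1^T t^{-\alpha}e^t\,dt\sim T^{-\alpha}e^T$ (the integrand concentrates at $t=T$, valid for every $\alpha\ge 0$), the sandwiching integral is $\sim \mathsf{c}'\,e^T/T^{p+h/2}$. Letting $\epsilon\to 0$ and shrinking the supports of the $\phi_i$ then yields the stated asymptotic, the precise constant $\mathsf{c}$ being tracked through the normalization of the bump functions and the Haar factor.

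The main obstacle will be justifying the passage from the pointwise convergence of Theorem \ref{main thm} to the integrated version: multiplying by the exponentially growing weight $\sinh(t)$ means that a direct appeal to dominated convergence requires a uniform a priori upper bound of the shape $|\langle a_t\cdot\phi_1,\phi_2\rangle|\ll t^{-(p+h/2)}$ valid for every $t\ge 1$, so that an intermediate range of $t$ does not produce spurious contributions. Such a uniform estimate should fall out of the Tauberian/renewal-theoretic machinery underlying the proof of Theorem \ref{main thm}. A secondary technical issue is that, because $\Gamma\backslash G$ is an infinite-volume $\mathbb{Z}^d$-cover, one must verify that the $K$-averaged bump functions remain in $C_c(\Gamma\backslash G)$ (guaranteed by the compactness of $K$) and that only finitely many $\gamma$ contribute to each instance of the thickened count, so that the unfolding is legitimate.
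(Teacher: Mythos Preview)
Your approach is correct and matches the paper's: it simply cites the argument of \cite[Theorem~7.16]{OS} (see also \cite[Section~7]{MO}, \cite[Section~6]{Oh}) with $H=\operatorname{SO}_2(\mathbb{R})/\{\pm\operatorname{I}\}$, which is exactly the Eskin--McMullen/DRS thickening-plus-wavefront scheme you outline. Your anticipated ``main obstacle'' is not one: since $\langle a_t\phi_1,\phi_2\rangle$ is trivially bounded by $\lVert\phi_1\rVert_\infty\lVert\phi_2\rVert_1$, splitting $\int_0^T\sinh(t)\,\langle a_t\phi_1,\phi_2\rangle\,dt$ at any fixed $T_0$ makes the $[0,T_0]$-contribution $O(1)=o(e^T/T^{p+h/2})$, while on $[T_0,T]$ the pointwise asymptotic of Theorem~\ref{main thm} alone suffices---no uniform $t^{-(p+h/2)}$ upper bound is required.
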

When $\Gamma_0$ is a uniform lattice, an analogous result is due to Pollicott-Sharp \cite{PS} (see also \cite{OP}). 

\begin{proof}
Consider the Cartan decomposition of $G=KA^+K$, where $K=\operatorname{SO}_2(\mathbb{R})/\{\pm I\}=\operatorname{Stab}_G(i)$ (here we are using the upper half plane model for $\mathbb{H}^2$) and $A^+=\{a_t:t\geq 0\}$. For every $T>1$, set $B_T=\{k_1a_tk_2:\,k_1,k_2\in K,\,a_t\,\text{wtih}\,1< t<T\}$. Define the counting function on $\Gamma\backslash G\times \Gamma \backslash G$ by
\begin{equation*}
F_{B_T}(h_1,h_2)=\sum_{\gamma\in \Gamma}1_{B_T}(h_1^{-1}\gamma h_2).
\end{equation*}
For $x,y\in \mathbb{H}^2$, write $x=g_1 i$ and $y=g_2 i$. We have
\begin{equation*}
\#\{\gamma\in \Gamma:\,d(x,\gamma y)<T\}=F_{B_T}(g_1,g_2)+O(1).
\end{equation*}

For $i=1,2$, let $\psi_i^{\epsilon}\in C^{\infty}(G)$ be an $\epsilon$-approximation of $g_i$, i.e., $\psi_i^{\epsilon}$ is a non-negative smooth function supported on the $\epsilon$-neigborhood of $g_i$ and $\int \psi_i^{\epsilon}dg=1$,  and let $\Psi^{\epsilon}_i\in C^{\infty}(\Gamma\backslash G)$ be its $\Gamma$-average: $\Psi_i^{\epsilon}(\Gamma g)=\sum_{\gamma\in \Gamma}\psi_i^{\epsilon}(\gamma g)$. By folding and unfolding the integrals, we have the following
\begin{align}
\label{folding}
&\langle F_{B_T},\Psi_1^{\epsilon}\otimes \Psi_2^{\epsilon}\rangle_{\Gamma\backslash G \times \Gamma\backslash G}\\
=&\int_{h\in B_T} \int_{g\in \Gamma\backslash G} \Psi_1^{\epsilon}(g)\Psi_2^{\epsilon}(gh)dg dh\nonumber\\
=&\int_{k_1a_tk_2\in B_T} \int_{g\in \Gamma\backslash G} \Psi_1^{\epsilon}(g) \Psi_2^{\epsilon}(gk_1a_tk_2)e^t dg dk_1 dt dk_2\nonumber \\
=&\int_{k_1a_tk_2\in B_T} \int_{g\in \Gamma\backslash G}\Psi_1^{\epsilon}(gk_1^{-1})\Psi_2^{\epsilon}(ga_tk_2)e^{t}dgdk_1dtdk_2.\nonumber
\end{align}
Using Theorem \ref{main thm}, we obtain
\begin{align*}
&\langle F_{B_T},\Psi_1^{\epsilon}\otimes \Psi_2^{\epsilon}\rangle_{\Gamma\backslash G\times \Gamma\backslash G}\\
=&\int_{k_1a_tk_2\in B_T} \frac{e^t}{t^{p+h/2}}(1+o(1))\mathsf{m}^{\operatorname{Haar}}(k_1^{-1}\cdot \Psi_1^{\epsilon}) \mathsf{m}^{\operatorname{Haar}}(k_2\cdot \Psi_2^{\epsilon}) dt dk_1k_2=\mathsf{c}\frac{e^{T}}{T^{p+h/2}}(1+o(1)).
\end{align*}
The strong wave front  property for $KA^+K$-decomposition \cite{GOS} implies that
\begin{equation*}
F_{B_T}(g_1,g_2)\approx \langle F_{B_T},\Psi_1^{\epsilon}\otimes \Psi_2^{\epsilon}\rangle_{\Gamma\backslash G\times \Gamma \backslash G},
\end{equation*}
finishing the proof of the theorem. 
\end{proof}

\subsection{Prime geodesic theorem} For $T>0$, let $\mathcal{P}_T$ be the collection of all primitive closed geodesics in $\operatorname{T}^1(M)$ of length at most $T$. Note that $\# \mathcal{P}_T=\infty$ if $d\geq 1$.

\begin{thm}
\label{geod}
Let $\Omega\subset \T^1(M)$ be a compact subset with Haar-negligible boundary. Then as $T\to \infty$,
\begin{equation}
\label{geod1}
\sum_{C\in \mathcal{P}_T}\frac{l(C\cap \Omega)}{l(C)}\sim\frac{\mathsf{c} \,e^T}{T^{p+h/2+1}} \mathsf{m}^{\operatorname{Haar}}(\Omega) .
\end{equation}
\end{thm}

Given Theorem \ref{main thm}, the proof of \cite[Theorem 5.1]{MMO} applies in the same way. The idea goes back to Margulis' work \cite{Ma}. The key is to estimate
\begin{equation}
\label{geod2}
\sum_{C\in \mathcal{P}_T}l(C\cap \Omega).
\end{equation}
From \eqref{geod2} to \eqref{geod1}, one uses a soft idea: Abel's summation formula (see the proof of Theorem 5.1 in \cite{MMO}).  To estimate \eqref{geod2}, one of the main observations is to relate \eqref{geod2} with a $\Gamma$-orbit counting problem \cite[Lemma 5.14]{MMO}. In the current setting, $\operatorname{T}^1(M)$ is not compact and this brings the challenge of estimating
\begin{equation}
\label{geod3}
\sum_{C\in \mathcal{P}_T}l(C\cap \text{infinity of}\,\,\,\operatorname{T}^1(M)).
\end{equation} 
When $M$ is a geometrically finite manifold, infinity of $\operatorname{T}^1(M)$ is the unit tangent bundle over the cuspidal regions. Such estimate is obtained by Roblin \cite{Ro} (see also \cite[Theorem 5.20]{MMO} for the statement). In \cite{Ve}, the author proved a prime geodesic theorem for SPR manifolds which are a generalization of geometrically finite manifolds with cusps \cite[Theorem 1.5]{Ve}, and the proof is built on a detailed study of the thermodynamics formalism of the geodesic flow on non-compact pinched negatively curved manifolds. When $M$ is a $\mathbb{Z}^d$-cover, infinity of $\operatorname{T}^1(M)$ includes the part with large $\mathbb{Z}^d$-coordinate, which is rather different from the cuspidal regions. It is not clear how to obtain \eqref{geod3} for such infinity. 

Another formulation of the prime geodesic theorem would be studying the distribution of closed geodesics in $\operatorname{T}^1(M_0)$ satisfying some homological constraints. An explicit main term in our setting was obtained by Epstein \cite{Ep} by exploring the spectral theory of the Laplace operator acting on flat line bundles and relating this with the counting function using Selberg trace formula. 

\subsection{Constants in the main theorem}
\label{subspace}
Let $\mathcal{H}\subset \Hom^1(M_0;\mathbb{R})$ be the linear subspace of cohomology classes which vanish on projections of cycles in $\Hom_1(M;\mathbb{R})$ to $\Hom_1(M_0;\mathbb{R})$. Since $M$ is a $\mathbb{Z}^d$-cover of $M_0$, the dimension of $\mathcal{H}$ is $d$. 

We describe a basis for $\mathcal{H}$. Given a loop $c$ in $M_0$, let $\tilde{c}$ be a lift of $c$ in $M$ and $g_c\in \Gamma_0$ the isometry mapping the beginning of $\tilde{c}$ to its endpoint. This defines a map
\begin{equation*}
\text{Frob}:\{\text{loops in}\,M_0\}\to \Gamma\backslash \Gamma_0,\,\,\,c\mapsto \Gamma g_c.
\end{equation*}
Observe that $\operatorname{Frob}(\cdot)$ depends only on the homotopy classes. So $\operatorname{Frob}(\cdot)$ is a homomorphism from $\pi_1(M_0)$ to $\Gamma\backslash \Gamma_0$. Since $\Gamma\backslash \Gamma_0$ is abelian, $\operatorname{Frob}(\cdot)$ can be further regarded as a homomorphism from $\Hom_1(M_0; \mathbb{Z})$ to $\Gamma\backslash \Gamma_0$. Using the isomorphism $\Gamma\backslash \Gamma_0\cong \mathbb{Z}^d$, the maps
\begin{equation*}
[c]\in \Hom_1(M_0;\mathbb{Z})\mapsto \langle \text{Frob}([c]),e_i\rangle\,\,\, (\{e_i\}\,\,\,\text{is the standard basis in}\,\,\,\mathbb{R}^d)
\end{equation*}
yield $d$ linearly independent cohomology classes in $\mathcal{H}$, which form a basis of $\mathcal{H}$.

We represent the elements in $\mathcal{H}$ by real harmonic $1$-forms with at most simple poles at the cusps (this is possible, see \cite[Section 2]{GL}, \cite[Lemma 1, 2]{DoSa} for example). The residue of a $1$-form at a cusp is the integral of that form on a loop which is homotopic to the cusp. Decompose $\mathcal{H}$ into a direct sum 
\begin{equation*}
\mathcal{H}=\mathcal{H}_{\text{p}}\oplus \mathcal{H}_{\text{h}},
\end{equation*}
where 
\begin{align*}
&\mathcal{H}_{\text{h}}:=\{w\in \mathcal{H}:\,\,\text{the residues of}\,\,w\,\text{at the cusps are all zero}\},\\
&\mathcal{H}_{\text{p}}:=\mathcal{H}/\mathcal{H}_{\text{h}}.
\end{align*}
The constants $p$ and $h$ in Theorem \ref{main thm} are given by
\begin{equation}
\label{con 1}
p=\text{dim}\, \mathcal{H}_{\text{p}}\,\,\,\text{and}\,\,\,h=\text{dim}\, \mathcal{H}_{\text{h}}.
\end{equation}

Set  $\overline{\mathsf{m}}^{\operatorname{Haar}}_{\Gamma_0}$ to be the measure on $M_0$ induced by $\mathsf{m}^{\operatorname{Haar}}_{\Gamma_0}$  on $\T^1(M_0)$ and \begin{equation}
\label{area}
\mathsf{m_0}=\overline{\mathsf{m}}^{\operatorname{Haar}}_{\Gamma_0}(M_0).
\end{equation} 
Assume $M_0$ is a genus $g$ surface with $t+1$ cusps. For $w\in \mathcal{H}$, denote by $\lambda_1(w),\ldots,\lambda_{t+1}(w)$ the residues of $w$ at $t+1$ cusps of $M_0$. Denote by $\lVert \cdot\rVert$ the norm in the cotangent bundle. Equip $\mathcal{H}_{\text{p}},\mathcal{H}_{\text{h}}$ with the following norms
\begin{align*}
&\lVert w\rVert_{\text{p}}:=\frac{1}{\mathsf{m}_0} \sum_{j=1}^{t+1}|\lambda_j(w)|\,\,\,\,\,\,\,\,\text{for}\,\,\,w\in \mathcal{H}_{\text{p}},\\
&\lVert w\rVert_{\text{h}}:=\left(\frac{1}{\mathsf{m}_0}\int_{M_0}\lVert w\rVert^2 d \overline{\mathsf{m}}_{\Gamma_0}^{\operatorname{Haar}}\right)^{1/2}\,\,\,\,\,\,\,\,\text{for}\,\,\,w\in \mathcal{H}_{\text{h}}.
\end{align*}

We identify $\mathcal{H}$ with $\mathbb{R}^d$ using the basis for $\mathcal{H}$ described above. Let $\mathbb{E}_{\text{p}}, \mathbb{E}_{\text{h}}$ be the linear subspaces in $\mathbb{R}^d$ corresponding to $\mathcal{H}_{\text{p}}$ and $\mathcal{H}_{\text{h}}$ respectively under the this identification. The norm on $\mathcal{H}_{\text{p}}$ (resp. $\mathcal{H}_{\text{h}}$) induces a norm on $\mathbb{E}_{\text{p}}$ (resp. $\mathbb{E}_{\text{h}}$), which we still denote by $\lVert \cdot \rVert_{\text{p}}$ (resp. $\lVert \cdot \rVert_{\text{h}}$). The constant $\mathsf{c}$ in Theorem \ref{main thm} is given by
\begin{equation}
\label{con 2}
\mathsf{c}=\frac{1}{(2\pi)^d\cdot\mathsf{m}_0}\int_{\mathbb{E}_{\text{p}}} e^{-\lVert \underline{x}\rVert_{\text{p}}}d\underline{x} \int_{\mathbb{E}_{\text{h}}} e^{-\lVert \underline{y}\rVert^2_{\text{h}}}d\underline{y}.
\end{equation}

\subsection{About the proof of Theorem \ref{main thm}} Theorem \ref{main thm} will be proved in pp. 11 -12 after preparatory work. It is proved following the strategy in \cite{OP}, which is inspired by the work of Guivarc'h and Hardy \cite{GH}. One uses the Stadlbauer-Ledrappier-Sarig's coding to  model the geodesic flow over $\T^1(M_0)$ by the suspension flow over the suspension space
\begin{equation*}
\Sigma^{\tau}:=\Sigma\times \mathbb{R}/\sim,
\end{equation*}
where $(\Sigma,\sigma)$ is a topologically mixing two-sided Markov shift of \emph{countable states} and the equivalence relation is defined via the left shift map $\sigma$ and the first return time $\tau:\Sigma\to \mathbb{R}$.  Then one lifts this coding to the covering space and hence the geodesic flow on $\operatorname{T}^1(M)$ can be studied through the suspension flow over the suspension space
\begin{equation*}
\Sigma^{f,\tau}:=\Sigma\times \mathbb{Z}^d\times \mathbb{R}/\sim,
\end{equation*}
where $f:\Sigma\to \mathbb{Z}^d$ records the change of $\mathbb{Z}^d$-coordinate under the return return map. 

The proof of Theorem \ref{main thm} involves investigating a two-parameter twisted transfer operator 
\begin{align*}
&L_{1-i\alpha,\underline{\theta}}: C(\Sigma^+,\mathbb{C})\to C(\Sigma^+,\mathbb{C}),\\
 &L_{1-i\alpha, \underline{\theta}}F(x)=\sum_{\sigma y=x}e^{-(1-i\alpha)\tau(y)+\langle f(y),\underline{\theta}\rangle}F(y),
\end{align*}
 where $\alpha\in \mathbb{R}$, $\underline{\theta}\in \mathbb{T}^d$ ($\mathbb{R}$ and $\mathbb{T}^d$ appear  as they are the unitary duals of $\mathbb{R}$ and $\mathbb{Z}^d$ respectively), and $(\Sigma^+,\sigma)$ is the one-sided version of $(\Sigma,\sigma)$. Local mixing is related with the asymptotic expansion of $\sum_{n=0}^{\infty}L^n_{1-i\alpha,\underline{\theta}}$ around $(0,\underline{0})$, so a knowledge on  the regularity of $L_{1-i\alpha, \underline{\theta}}$ and its eigenvalue with maximal modulus $\lambda_{1-i\alpha,\underline{\theta}}$ is necessary. When $M_0$ is compact, $L_{1-i\alpha,\underline{\theta}}$ and $\lambda_{1-i\alpha,\theta}$ are analytic with respect to $(\alpha,\underline{\theta})$. In the current setting, the set $\Sigma$ comes from a Poincar\'{e} section of the geodesic flow over $\operatorname{T}^1(M_0)$. It may happen that some vector goes far into the cusps before it returns to the Poincar\'{e} section. When lifting the coding to  the covering space, this may result that the change of $\mathbb{Z}^d$-coordinate under the first return map is large and hence $L_{1-i\alpha,\underline{\theta}}$ may not analytic. Here one looks deep into the relation between the function $f$ and the amount of time a vector spends in cusps and we benefits from the thorough analysis carried out by Ledrappier-Sarig \cite{LS}. This resonates with Epstein's work \cite{Ep} in some degree. One main part of  \cite{Ep} is about the differentiability of the eigenvalue $\lambda_{\underline{\theta}}$ of the corresponding Laplace operator with respect to $\underline{\theta}\in \mathbb{T}^d$.

\subsection{On the generalization of Theorem \ref{main thm}}
Let $\mathbb{H}^{n}$ be the hyperbolic $n$-space with $n> 1$. Let $\Gamma_0<\operatorname{Isom}_{+}(\mathbb{H}^n)$ be a Schottky group and $\Gamma$ be a normal subgroup of $\Gamma_0$ satisfying $\Gamma\backslash \Gamma_0\cong \mathbb{Z}^d$. We expect that  the geodesic flow on $\operatorname{T}^1(\Gamma\backslash \mathbb{H}^n)$ can be shown to satisfy the local mixing property using the argument of Theorem \ref{main thm} and the knowledge of the corresponding twisted transfer operator obtained by Babillot and Peign\'{e} \cite{BP}.

\subsection{Acknowledgement} I would like to thank my advisor Hee Oh, who was  a source of inspiration and encouragement throughout. 
\section{Preparations: coding and transfer operator}

\subsection{Symbolic model for the geodesic flow}
Stadlbauer \cite{Sta} and Ledrappier-Sarig \cite{LS} use a two-sided countable Markov shift $(\Sigma,\sigma)$ with the set of countable states $\mathcal{A}$ to encode the nonwondering set of the geodesic flow on $\T^1(M_0)$. We provide a sketch of the construction here. We start with recalling some basic notions regarding \emph{countable Markov shifts}. Suppose $\mathcal{A}$ is a \emph{countable} set of states and $\mathbb{A}=(t_{ab})_{\mathcal{A}\times \mathcal{A}}$ is a matrix of zeros and ones with no columns or rows which are made solely of zeroes.

\begin{defn}[TMS] The two-sided \emph{topological Markov shift (TMS)} with the set of states $\mathcal{A}$ and transition matrix $\mathbb{A}$ is the set
\begin{equation*}
\Sigma:=\{(\ldots, x_{-1},x_0,x_1,\ldots)\in \mathcal{A}^{\mathbb{Z}}:\forall i\in \mathbb{Z},t_{x_i x_{i+1}}=1\}
\end{equation*}
equipped with the topology generated by the two sided cylinders
\begin{equation*}
[a_m,\ldots,a_{n}]:=\{x\in \Sigma:(x_m,\ldots,x_{n})=(a_m,\ldots,a_{n})\}\,\,\,(m,n\in \mathbb{Z}\,\,\,\text{and}\,\,\,m<n)
\end{equation*}
and the action of the left shift $\sigma:\Sigma\to \Sigma,(\sigma x)_i=x_{i+1}$.
\end{defn}
We say $(\Sigma, \sigma)$ has the \emph{Big Images and Preimages (BIP) property} if there is a finite set of states $b_1,\ldots,b_N$ s.t. $\forall a\in \mathcal{A},\,\exists 1\leq i,j\leq N$ s.t. $t_{b_ia}t_{ab_j}=1$.

Given a TMS $(\Sigma, \sigma)$, a function $g:\Sigma\to \mathbb{R}$ is called \emph{locally H\"{o}lder continuous} if $\exists C>0,0<\theta<1$ s.t. 
\begin{equation*}
x_{-n}^{n}=y_{-n}^{n}\,\,\,(n\geq 0)\implies |g(x)-g(y)|<C\theta^n.
\end{equation*}

{\textbf{Stadlbauer-Ledrappier-Sarig coding.}}
As $M_0$ contains at least one cusp, Tukia \cite{Tu} showed that $\Gamma_0$ has a special fundamental domain $D_0$, containing the origin $0$ in the Poincar\'{e} disk model, all of whose vertices lie in $\partial \mathbb{H}^2$, so that the set of side pairings $\mathcal{S}$ of $D_0$ is a minimal symmetric generating set for $\Gamma_0$. The classical Bowen-Series coding $(\tilde{\Sigma}, \tilde{\sigma})$ is constructed using an edge cutting sequence $(x_k)$ of a bi-infinite geodesic so that the geodesic is entering the translate $\gamma_k D_0$ as it passes through the edge $x_k$. 

But the classical coding is difficult to use in the presence of cusps. One of the issues is that the first return time map, which gives the amount of time a bi-infinite geodesic spends in $D_0$, is not H\"{o}lder or even bounded because one can find a geodesic which travels arbitrarily far in the cusp.

The rough idea of the new coding is that one considers a subset $A$ in $\tilde{\Sigma}$ whose elements represent unit tangent vectors which do not travel far into the cusps of $D_0$ in the future and past, and then records the first return map of $A$. So one needs to clump together all large powers of parabolic elements in the cutting sequences. 

A more detailed description is as follows. Let $\mathcal{C}$ be the collection of all minimal length freely reduced words in $\mathcal{S}$ representing parabolic elements.  They choose a sufficiently large even number $2N$ so that the length of every element of $\mathcal{C}$ divides $2N$ and let $\mathcal{C}^*$ be the collection of powers of elements of $\mathcal{C}$ of length $2N$. A precise description for the set $A$ is that $A$ consists of the sequences $(b_i)_{i\in \mathbb{Z}}$ such that $(b_{-N+1},\ldots,b_N)\notin \mathcal{C}^*$. Let $\mathcal{A}_1$ be the set containing all the strings $(b_0,b_1,\ldots,b_{2N})$ so that $b_0b_1\ldots b_{2N}$ is reduced in $\Gamma_0$ and so that neither $b_0b_1\ldots b_{2N-1}$ nor $b_1b_2\ldots b_{2N}$ lies in $\mathcal{C}^*$. Let $\mathcal{A}_2$ be the set containing the strings of the form $(b, w^s,w_1,\ldots, w_{k-1},c)$ where $b\in \mathcal{S}-\{w_{2N}\}$, $w=w_1\ldots w_{2N}\in \mathcal{C}^*$, $w_i\in \mathcal{S}$ for all $i$, $1\leq k\leq 2N$, $s\geq 1$ and $c\in \mathcal{S}-\{w_k\}$.

Let $\mathcal{A}=\mathcal{A}_1\cup \mathcal{A}_2$. Given a word $x=(b_i)_{i\in \mathbb{Z}}\in A$, we explain how to rewrite it using the alphabets in $\mathcal{A}$. If $(b_{-N+1},\ldots,b_0,\ldots,b_N,b_{N+1})\in \mathcal{A}_1$, then $x_0=(b_{-N+1},\ldots, b_0,\ldots, b_N,b_{N+1})$ and we shift $x$ leftward by $1$ to compute $x_1$. If not, let $x_0$ be the unique subsequence in $x$ which contains $(b_{-N+1},\ldots,b_N)$ and is an element of $\mathcal{A}_2$. Then $x_0$ is of the form $(b_{-N+1},w^{s},w_1,\ldots, w_{k-1},b_{M})$ for some $w\in \mathcal{C}^*$ and $M>N+1$. We shift $x$ leftward by $2N(s-1)+k+1$ to compute $x_1$. To get $x_{-1}$, we consider $(b_{-N},\ldots,b_N)$. If $(b_{-N},\ldots, b_{N})\in \mathcal{A}_1$, then $x_{-1}=(b_{-N},\ldots,b_{N})$. If not, let $x_{-1}$ be the unique subsequence in $x$ which contains $(b_{-N},\ldots,b_N)$ and is an element of $\mathcal{A}_2$. In this way, we have defined the two-sided Markov shift $(\Sigma, \sigma)$ with countable states for the dynamical system $(\T^1(M_0),\mathcal{G}^s)$.

Let $p:\operatorname{T}^1(M)\to \operatorname{T}^1(M_0)$ be the covering map. Let $\Omega_0\subset \operatorname{T}^1(M_0)$ be the nonwondering set for the geodesic flow and set $\Omega:=p^{-1}(\Omega_0)$. 
Let $\{D_{\underline{\xi}}:\underline{\xi}\in \mathbb{Z}^d\}$ be the group of deck transformations for  $p:\operatorname{T}^1(M)\to \operatorname{T}^1(M_0)$ and we enumerate them in such a way that $D_{\underline{\xi}+\underline{\xi}'}=D_{\underline{\xi}}\circ D_{\underline{\xi}'}$ for any $\underline{\xi},\underline{\xi}'\in \mathbb{Z}^d$.  We obtain a coding for the geodesic flow $\mathcal{G}^s:\Omega\to \Omega$ from Stadlbauer-Ledrappier-Sarig coding by adding a function that records the $\mathbb{Z}^d$-displacement under the first return time map.
\begin{lem}\cite[Lemma 2.2]{LS}
\label{subshift}
There exist a topologically mixing TMS $(\Sigma,\sigma)$ of countable states, a positive continuous function $\tau:\Sigma\to \mathbb{R}$ and a continuous function $f:\Sigma\to \mathbb{Z}^d$ s.t. $f(x)=f(x_0)$ with the following properties:
\begin{enumerate}
\item $(\Sigma,\sigma)$ has the BIP property.

\item The quotient space $\Sigma^{f,\tau}:=\Sigma\times \mathbb{Z}^d\times \mathbb{R}/(x,\underline{\xi},t)\sim (\sigma x, \underline{\xi}+f(x),t-\tau(x))$ is homeomorphic to $\Omega$. Denote the homeomorphism by $\pi$.

\item The suspension flow over $\Sigma^{f,\tau}$ is conjugate to the geodesic flow $\mathcal{G}^s: \Omega \to \Omega$. 

\item There exist a locally H\"{o}lder continuous function $r:\Sigma\to \mathbb{R}$ which depends only on nonnegative coordinates and a uniformly continuous function $h:\Sigma\to \mathbb{R}$ such that $r=\tau-(h-h\circ \sigma)$.

\item There exist $C>0$ and $K>0$ s.t. $r+r\circ \sigma+\cdots+r\circ \sigma^{n-1}\geq C$ for all $n\geq K$.

\item If we set $Q_{\underline{\xi_0},t_0}(x,\underline{\xi},t)=(x,\underline{\xi}+\underline{\xi_0},t+t_0)$, then $\pi\circ Q_{\underline{\xi_0},t_0}=\big(\mathcal{G}^{t_0}\circ D_{\underline{\xi_0}}\big)\circ \pi$ for all $(\underline{\xi_0},t_0)\in \mathbb{Z}^d\times \mathbb{R}$.
\end{enumerate}
\end{lem}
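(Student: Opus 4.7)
The strategy is to lift the countable Markov coding of the geodesic flow on $\Omega_0\subset\operatorname{T}^1(M_0)$ constructed by Ledrappier and Sarig in \cite{LS}. Their construction already supplies the topologically mixing BIP TMS $(\Sigma,\sigma)$, the locally Hölder function $r$, the uniformly continuous function $h$, the first return time $\tau=r-(h-h\circ\sigma)$ satisfying the positivity in (5), and a coding map $\pi_0:\Sigma^{\tau}\to\Omega_0$ conjugating the suspension flow to the base geodesic flow; these deliver properties (1), (4) and (5) verbatim. What remains is to construct the $\mathbb{Z}^d$-valued cocycle $f$ and to extend $\pi_0$ to a homeomorphism $\pi:\Sigma^{f,\tau}\to\Omega$ satisfying (2), (3) and (6).

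To define $f$, fix a fundamental domain $\mathcal{F}$ for the $\mathbb{Z}^d$-action $\{D_\xi\}$ on the lifted Poincaré section $p^{-1}(\pi_0(\Sigma\times\{0\}))$, and let $\widetilde{\pi}_0(x)$ denote the unique preimage of $\pi_0(x,0)$ lying in $\mathcal{F}$. Since $\mathcal{G}^{\tau(x)}\widetilde{\pi}_0(x)$ projects to $\pi_0(\sigma x,0)$, there is a unique $\xi(x)\in\mathbb{Z}^d$ with
\[\mathcal{G}^{\tau(x)}\widetilde{\pi}_0(x)=D_{\xi(x)}\widetilde{\pi}_0(\sigma x),\]
and one sets $f(x):=\xi(x)$. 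The required claim $f(x)=f(x_0)$ will follow from the geometric meaning of the LS partition: each symbol $a\in\mathcal{S}$ parametrizes a fundamental geodesic arc crossing one specific side-pairing of the hyperbolic polygon $P_0$, so the element of $\Gamma_0$ realigning the lifted endpoint of that arc with $\mathcal{F}$ depends only on $a$, and hence so does its image in $\Gamma\backslash\Gamma_0\cong\mathbb{Z}^d$.

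With $f$ in hand, set $\pi(x,\xi,t):=\mathcal{G}^t\bigl(D_\xi\widetilde{\pi}_0(x)\bigr)$. The cocycle relation displayed above is exactly the compatibility ensuring that $\pi$ descends to the quotient $\Sigma\times\mathbb{Z}^d\times\mathbb{R}/\sim$, yielding (2); continuity and bijectivity onto $\Omega$ are inherited from the corresponding properties of $\pi_0$ together with the discreteness and properness of the deck action. Property (3) is then built into the definition of $\pi$, since translation of the third coordinate corresponds to the geodesic flow on $\Omega$. Property (6) is the direct computation
\[\pi\circ Q_{\xi_0,t_0}(x,\xi,t)=\mathcal{G}^{t+t_0}D_{\xi+\xi_0}\widetilde{\pi}_0(x)=\mathcal{G}^{t_0}\bigl(D_{\xi_0}\pi(x,\xi,t)\bigr).\]

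The main obstacle I anticipate is the rigorous verification that $f$ depends only on the zeroth coordinate. A priori $\xi(x)$ could depend on the full history of $x$, and excluding this uses in an essential way the combinatorial structure of the LS partition, whose sections are aligned with the side-pairings of Tukia's even-sided cusp polygon $P_0$; equivalently, the homomorphism $\pi_1(M_0)\to\Gamma\backslash\Gamma_0$ must factor through the zero-cylinder. A subsidiary technical point is that $\Sigma$ is non-compact, so the continuity of $\widetilde{\pi}_0$ (and hence of $\pi$) needs to be checked cylinder by cylinder before one can promote $\pi$ to a global homeomorphism onto $\Omega$; both issues are handled by arguments parallel to those used in the Schottky setting of \cite{OP}, now applied to the LS polygonal coding.
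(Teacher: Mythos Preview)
Your proposal is correct and follows the same approach as the paper: the paper does not give a detailed proof of this lemma but simply cites \cite[Lemma 2.2]{LS} for items (1)--(5) and remarks, in the paragraph preceding the statement, that the coding for $\Omega$ is obtained from the Ledrappier--Sarig coding for $\Omega_0$ ``by adding a function that records the $\mathbb{Z}^d$-displacement under the first return time map,'' which is precisely your construction of $f$ and the lift $\pi$. Your write-up is in fact more detailed than what the paper provides.
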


\subsection{The transfer operator and the Haar measure}
In this subsection, we charaterize the Haar measure $\mathsf{m}^{\operatorname{Haar}}$ on $\T^1(M)$ in the symbolic model $\Sigma^{f,\tau}$. 

Let $(\Sigma^+,\sigma)$ be the one-sided version of the Markov shift $(\Sigma,\sigma)$ given in Lemma \ref{subshift}. Here we use $\sigma$ to denote the left-shift map on $\Sigma^+$ by abusing notation. Let $r$ be the function on $\Sigma$ given in Lemma \ref{subshift}. As it depends only on nonnegative coordinates, we can regard it as a function on $\Sigma^+$. For a map $g$ on $\Sigma^+$ and $n\geq 1$, we write
\begin{equation*}
g_n(x)=g(x)+g(\sigma (x))+\cdots+g(\sigma^{n-1}(x)).
\end{equation*}
The \emph{Gurevich topological pressure} of $-r$ is given by
\begin{equation*}
P_{\operatorname{top}}(-r):=\lim_{n\to \infty} \frac{1}{n}\log \sum_{\sigma^n z=z} e^{-r_n(z)}1_{[a_0]}(z)\,\,\,\text{for some state}\,\,\,a_0\in \mathcal{A}.
\end{equation*}
The limit exists and is independent of the choice of $a_0\in\mathcal{A}$ \cite[Proposition 3.2]{Sa2}.

Denote by $C_{B}(\Sigma^+)$ the set of bounded continuous functions. The Ruelle transfer operator $L:C_B(\Sigma^+)\to C_B(\Sigma^+)$ is defined by
\begin{equation*}
(LF)(x)=\sum_{\sigma (y)=x}e^{-r(y)}F(y)\,\,\,\text{for any}\,\,\,F\in C_B(\Sigma^+).
\end{equation*}

As we are working with a countable Markov shift, we need a few words to justify that the Ruelle transfer operator $L$ is well-defined.

\begin{lem}
\label{justify}
The transfer operator $L:C_{B}(\Sigma^+)\to C_B(\Sigma^+)$ is well-defined.
\end{lem}

\begin{proof}
Using Stadlbauer-Ledrappier-Sarig coding, we can code the geodesic flow  over $\operatorname{T}^1(M_0)$  using the suspension flow over the suspension space $\Sigma^{\tau}:=\Sigma\times \mathbb{R}/(x,t)\sim (\sigma x,t-\tau(x))$. Denote the map by $\pi_A$. Define $\psi:\Sigma^+\to \mathbb{R}_{>0}$ by 
\begin{equation}
\label{eigenfunction}
\psi(x)=\text{hyperbolic length measure of}\,\,\,\{\pi_A(y,h(y)):y_0^{\infty}=x_0^{\infty}\}. 
\end{equation}
It is a continuous function.  Set $P(x_0)=\{a\in \mathcal{A}:\,[a,x_0]\neq \emptyset\}$. Observe that
\begin{align*}
&\{\pi_A(y,h(y)):y_0^{\infty}=x_0^{\infty}\}=\pi_A\{\sqcup_{a\in P(x_0)}\{(y,h(y)):y_{-1}^{\infty}=(a,x_0,x_1,\ldots)\}\}\\
=&\pi_A\{\sqcup_{a\in P(x_0)}\{(z,\tau(z)+h(\sigma(z))):z_0^{\infty}=(a,x_0,x_1,\ldots)\}\}\\
=&\pi_A\{\sqcup_{a\in P(x_0)}\{(z,r(z)+h(z)):z_0^{\infty}=(a,x_0,x_1,\ldots)\}\}\\
=&\sqcup_{a\in P(x_0)}\mathcal{G}^{r(ax_0^{\infty})}\pi_A\{(z,h(z)):z_0^{\infty}=(a,x_0,x_1,\ldots)\}.
\end{align*}
So we have the following functional equation for $\psi$:
\begin{equation}
\label{eigenfunction}
L\psi=\psi.
\end{equation}

Meanwhile, for any $a_0\in \mathcal{A}$, $x\in [a_0]$ and $n\geq 1$, we define the following map
\begin{align*}
\{y\in [a_0]: \sigma^n y=x\}&\to \{z\in [a_0]:\sigma^n z=z\}\\
y&\mapsto (y_0,y_1,\ldots, y_{n-1},y_0,y_1,\ldots, y_{n-1},\ldots).
\end{align*}
This is a bijection. As $r$ is locally H\"{o}lder, there exists $M>0$ such that for any $a_0\in\mathcal{A}$, $x\in [a_0]$ and $n\in \mathbb{N}$, we have 
\begin{equation}
\label{period}
\sum_{\sigma^n z=z} e^{-r_n(z)}1_{[a_0]}(z)=M^{\pm 1}(L^n 1_{[a_0]})(x). 
\end{equation}
As $\psi$ is continuous, one is able to  compare $1_{[a_0]}(y)$ and $\psi(y)$ for $y\in [a_0]$. Then \eqref{eigenfunction} and \eqref{period} together imply $P_{\operatorname{top}}(-r)<\infty$. Then we have that $\psi$ is locally 	H\"{o}lder continuous and bounded away from zero and infinity \cite[Corollary 2]{Sa}. This enables us to  compare any bounded function $F$ with $\psi$ and hence deduce that the operator $L:C_B(\Sigma^+)\to C_B(\Sigma^+)$ is well-defined.
\end{proof}

\begin{lem}
\cite[Lemma 3.1]{LS}
\label{measure}
There exists a unique finite measure $\rho$ on $\Sigma^+$ such that $\rho (LF)=\rho (F)$ for all non-negative $F\in C_B(\Sigma^+)$, and such that $d\nu^+:=\psi d\rho$  with $\psi$ given as in \eqref{eigenfunction} is a shift invariant probability measure on $\Sigma^+$. Let $\nu$ be the natural extension to $\Sigma$. 
Then the Haar measure $\mathsf{m}^{\operatorname{Haar}}$ on $\operatorname{T}^1(M)$ is equal to $\frac{\mathsf{m}_0}{\int \tau d\nu}\big(d\nu d\underline{\xi} dt|_{\{(x,\underline{\xi},t):0\leq t< \tau(x)\}}\big)\circ \pi^{-1}$, where $\mathsf{m_0}$ is the constant given in (\ref{area}).
\end{lem}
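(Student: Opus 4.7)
The plan is to invoke Sarig's thermodynamic formalism for topologically mixing countable Markov shifts with the BIP property, construct a flow-invariant measure via the suspension $\Sigma^{f,\tau}$, and then identify its pushforward under $\pi$ with the Haar measure via the standard local product structure of the geodesic flow.

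\emph{Step 1: Existence and uniqueness of $\rho$.} The shift $(\Sigma^+,\sigma)$ is topologically mixing with BIP, $-r$ is locally H\"older, and the Gurevich pressure $P_{\mathrm{top}}(-r)$ is finite (as already observed in the excerpt from the existence of $\psi$). Sarig's generalized Ruelle--Perron--Frobenius theorem then produces a unique (up to a multiplicative scalar) $\sigma$-finite conservative Borel measure $\rho$ on $\Sigma^+$ satisfying $\rho(LF)=\rho(F)$ for every nonnegative $F\in C_B(\Sigma^+)$. The eigenfunction identity $L\psi=\psi$ combined with the $L^*$-invariance of $\rho$ shows that $d\nu^+:=\psi\,d\rho$ is $\sigma$-invariant, and since $\psi$ is bounded above and bounded away from zero, finiteness of $\nu^+$ is equivalent to finiteness of $\rho$. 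Normalizing $\nu^+$ to be a probability measure pins down $\rho$ uniquely.

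\emph{Step 2: Natural extension and suspension.} Let $\nu$ be the natural extension of $\nu^+$ to the two-sided shift, a $\sigma$-invariant probability measure. On $\Sigma^{f,\tau}$, form the measure $d\nu\,d\xi\,dt$ (with $d\xi$ counting on $\mathbb{Z}^d$ and $dt$ Lebesgue on $\mathbb{R}$), restricted to the fundamental domain $\{(x,\xi,t):0\le t<\tau(x)\}$. By the standard Kac construction this is invariant under the suspension flow; pushing it forward via the conjugacy $\pi$ gives, by property~(3) of Lemma~\ref{subshift}, a $\mathcal{G}^s$-invariant locally finite Borel measure on $\Omega$, which extends by zero to $\operatorname{T}^1(M)$ since $\Omega^c$ is a null set for any flow-invariant locally finite measure (lifted from the finite-volume base).

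\emph{Step 3: Identification with the Haar measure and the constant.} The Haar measure $\mathsf{m}^{\operatorname{Haar}}$ is characterized, up to scale, as the $\mathcal{G}^s$-invariant locally finite Borel measure on $\operatorname{T}^1(M)$ whose local disintegration is Lebesgue along strong stable, strong unstable, and flow directions. Ledrappier and Sarig constructed $\psi$ precisely so that it records the infinitesimal length of symbolic local strong stable leaves under the coding, and the identities $L\psi=\psi$ and $L^*\rho=\rho$ match the contraction/expansion rates of $\mathcal{G}^s$ along the strong stable and strong unstable directions. Consequently, the symbolic measure $\nu$ disintegrates on cylinders into conditionals that agree with the geometric stable/unstable product, forcing the pushforward of Step~2 to equal a positive scalar multiple of $\mathsf{m}^{\operatorname{Haar}}$. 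To pin down the scalar, I project down to $M_0$: each visit of the Poincar\'e section contributes $\int \tau\,d\nu$ to the total mass (since $\nu$ is a probability measure), while $\overline{\mathsf{m}}^{\operatorname{Haar}}_{\Gamma_0}(M_0)=\mathsf{m}_0$, so the normalization must be $\mathsf{m}_0/\int\tau\,d\nu$.

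The main obstacle is the precise matching of the symbolic and geometric local product structures in Step~3; it relies on the delicate Ledrappier--Sarig coding in the presence of cusps and on their identification of $\psi$ with the symbolic strong stable length scale. This is exactly the content of \cite[Lemma 3.1]{LS}, which I invoke directly rather than reprove.
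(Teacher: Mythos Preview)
Your Steps 1 and 2 match the paper's outline: existence and uniqueness of $\rho$ come from Sarig's RPF theory for BIP shifts, and the suspension construction is standard. The divergence is in Step 3. The paper does \emph{not} argue via the local product (stable/unstable/flow) structure. Instead it descends to the finite-volume base $\Gamma_0\backslash G$: using the argument of Babillot--Ledrappier \cite[Proposition~6]{BL2} it shows that the measure $d\nu\,dt|_{\{0\le t<\tau(x)\}}$ on $\Gamma_0\backslash G$ is invariant under the \emph{horocycle} flow, and then invokes Dani--Smillie \cite{DS} to conclude that any such measure is a scalar multiple of Haar. The lift to $\Gamma\backslash G$ and the normalizing constant $\mathsf{m}_0/\int\tau\,d\nu$ then follow immediately.

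Your route is in principle valid, but it is both harder and less self-contained: actually verifying that $\nu$ has the correct Lebesgue conditionals along symbolic stable and unstable leaves requires precisely the delicate analysis of $\psi$ in \cite{LS} that you end up citing anyway, whereas the horocycle-invariance argument bypasses this entirely and reduces the identification to a soft uniqueness statement on the finite-volume quotient. There is also a minor circularity in your Step~2: asserting that $\Omega^c$ is null ``for any flow-invariant locally finite measure lifted from the finite-volume base'' presupposes you already know your measure is such a lift, which is exactly what Step~3 is meant to establish. The paper avoids this by working on $\Gamma_0\backslash G$ first and lifting afterward.
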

In fact, we have that $(\Sigma^+,\sigma)$ is a topologically mixing Markov shift satisfying the BIP property, $r$ is locally H\"{o}lder and the Gurevich pressure $P_{\operatorname{top}}(-r)<\infty$. The existence of $\rho$ and the statement about $\nu^+$ follow from the general theory of the BIP shifts \cite{Sa}. Using the argument in \cite[Section 6]{BL2},  one obtains the symbolic description of $\mathsf{m}^{\operatorname{Haar}}$.

\section{Local mixing and matrix coefficients for local functions} 


\subsection{Asymptotic analysis of symbolic sum}
\label{analysis}

For any $x,y\in \Sigma^+$, let $t(x,y):=\min \{n\geq 0: x_n\neq y_n\}$. 
Define for $\Phi:\Sigma^+\to \mathbb{R}$, $$\operatorname{Lip}(\Phi):=\sup \{|\Phi(x)-\Phi(y)|/\theta^{t(x,y)}:x_0=y_0\}.$$  Set\begin{equation*}
\mathcal{L}(\Sigma^+):=\{\Phi:\Sigma^+\to \mathbb{R}: \lVert \Phi \rVert:=\operatorname{Lip}(\Phi)+\lVert \Phi\rVert_{\infty}<\infty\}.
\end{equation*}
This is  a Banach space.

Let $r$ be the function on $\Sigma^+$ given as in Lemma \ref{subshift}. For any $t>1$, $\Phi\in \mathcal{L}(\Sigma^+)$ and $u\in C_c(\mathbb{R})$, consider the symbolic sum (which is a function on $\Sigma^+\times \mathbb{Z}^d$):
\begin{equation*}
\mathsf{Q}_t(\Phi \otimes u)(x,\underline{\xi}):= \sum_{n=0}^{\infty} \sum_{\sigma^n y=x} e^{-r_n(y)}(\Phi \cdot \psi)(y) \delta_{\underline{\xi}}(f_n(y)) u(r_n(y)-t).
\end{equation*}

\begin{thm}\cite[Lemma 5.1]{LS}
\label{asym}
For any $(x,\underline{\xi})\in \Sigma^+\times \mathbb{Z}^d$, we have 
\begin{equation*}
\lim_{t\to \infty} t^{p+h/2} \mathsf{Q}_t(\Phi\otimes u)(x,\underline{\xi})=\frac{\mathsf{c}\,\mathsf{m}_0}{\int \tau d\nu}\psi(x) \nu(\Phi)\int u(t)dt,
\end{equation*}
where $p,h,\mathsf{c}$ and $\mathsf{m}_0$ are the constants given in (\ref{con 1}), (\ref{con 2}) and (\ref{area}) and the convergence is uniform on compact sets in $\Sigma^+\times \mathbb{Z}^d$.
\end{thm}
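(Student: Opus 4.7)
The plan is to proceed by Fourier inversion in the abelian-cover variable $\mathbb{Z}^d$ and in the flow parameter $\mathbb{R}$, recasting the symbolic sum as an integral of iterates of a two-parameter twisted Ruelle operator, and then applying the spectral analysis of that operator carried out in \cite{LS}. Using Pontryagin duality,
\begin{equation*}
\delta_{\xi}(f_n(y))=\frac{1}{(2\pi)^d}\int_{[-\pi,\pi]^d}e^{i\langle\underline{\theta},f_n(y)-\xi\rangle}\,d\underline{\theta},
\end{equation*}
and Fourier inversion on $\mathbb{R}$ gives $u(r_n(y)-t)=\tfrac{1}{2\pi}\int_{\mathbb{R}}\hat u(\lambda)e^{i\lambda(r_n(y)-t)}\,d\lambda$. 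Substituting these and swapping the order of integration and summation (justified by the compact support of $u$ and property~(5) of Lemma~\ref{subshift}) converts $\mathsf{Q}_t(\Phi\otimes u)(x,\xi)$ into
\begin{equation*}
\frac{1}{(2\pi)^{d+1}}\int_{\mathbb{R}}\int_{[-\pi,\pi]^d}e^{-i\langle\underline{\theta},\xi\rangle-i\lambda t}\hat u(\lambda)\sum_{n\ge0}\bigl(L_{\underline{\theta},\lambda}^n(\Phi\cdot\psi)\bigr)(x)\,d\underline{\theta}\,d\lambda,
\end{equation*}
where
\begin{equation*}
L_{\underline{\theta},\lambda}F(x):=\sum_{\sigma y=x}e^{(-1+i\lambda)r(y)+i\langle\underline{\theta},f(y)\rangle}F(y)
\end{equation*}
is the two-parameter twisted transfer operator and $L_{\underline 0,0}=L$.

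Next I import the spectral picture for $L_{\underline{\theta},\lambda}$ on $\mathcal{L}(\Sigma^+)$ established in \cite{LS}: on a small neighbourhood $U$ of $(\underline 0,0)$ in $[-\pi,\pi]^d\times\mathbb{R}$, the operator has a simple dominant eigenvalue $\kappa(\underline{\theta},\lambda)$ with $\kappa(\underline 0,0)=1$, a corresponding eigenfunction $\psi_{\underline{\theta},\lambda}$ depending continuously on the parameters with $\psi_{\underline 0,0}=\psi$, and a uniform spectral gap; on the complement of $U$ inside $[-\pi,\pi]^d\times[-M,M]$ the spectral radius is strictly less than $1$, thanks to the non-arithmeticity of the cocycle $(r,f)$ and the BIP property. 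The crucial quantitative input is the asymptotic
\begin{equation*}
1-\kappa(\underline{\theta},\lambda)=-i\lambda\!\int\! r\,d\nu^{+}+\|\underline{\theta}_{\text{p}}\|_{\text{p}}+\tfrac12\|\underline{\theta}_{\text{h}}\|_{\text{h}}^{2}+o\bigl(|\lambda|+\|\underline{\theta}_{\text{p}}\|_{\text{p}}+\|\underline{\theta}_{\text{h}}\|_{\text{h}}^{2}\bigr)
\end{equation*}
for $\underline{\theta}=(\underline{\theta}_{\text{p}},\underline{\theta}_{\text{h}})\in\mathbb{E}_{\text{p}}\oplus\mathbb{E}_{\text{h}}$: the non-smooth linear term in the parabolic directions reflects the Cauchy tail of the winding cocycle around the cusps, while the quadratic term in the hyperbolic directions is the Gaussian central-limit variance of the harmonic part.

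With these in hand, the spectral decomposition of $L_{\underline{\theta},\lambda}$ on $U$ gives, after summing the geometric series in $n$, a principal term proportional to $1/(1-\kappa(\underline{\theta},\lambda))$ whose coefficients tend to $\psi(x)\,\nu(\Phi)$ as $(\underline{\theta},\lambda)\to(\underline 0,0)$, plus a uniformly bounded remainder; the contribution from the complement of $U$ decays exponentially in $t$ after integration against $\hat u(\lambda)e^{-i\lambda t}$, using that $\hat u$ is Schwartz. I then rescale $\underline{\theta}_{\text{p}}=\underline s/t$, $\underline{\theta}_{\text{h}}=\underline w/\sqrt t$ and $\lambda=\mu/t$: the Jacobian is $t^{-(p+h/2+1)}$, the factor $1/(1-\kappa)$ rescales to size $t$, and $e^{-i\langle\underline{\theta},\xi\rangle}\to 1$ pointwise in $\xi$, which is why the limit is $\xi$-independent. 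The rescaled integrand then converges by dominated convergence; performing the $\mu$-integral via the standard Fourier–Heaviside identity and then rescaling $\underline s,\underline w$ so that the exponents become $-\|\cdot\|_{\text{p}}$ and $-\|\cdot\|_{\text{h}}^{2}$ produces exactly $\mathsf{c}\,\mathsf{m}_0/\!\int\!\tau\,d\nu$ after matching with (\ref{con 2}), Lemma~\ref{measure}, and the identity $\int\tau\,d\nu=\int r\,d\nu^{+}$.

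The main obstacle is the uniform spectral analysis of $L_{\underline{\theta},\lambda}$ on the countable-state shift $\Sigma^+$: the cusps of $M_0$ make $r$ unbounded and only locally H\"older, so the Kato perturbation of the Ruelle operator has to be implemented carefully on $\mathcal{L}(\Sigma^+)$, and the required decay away from $(\underline 0,0)$ rests on a Dolgopyat-style non-arithmeticity argument for the cocycle $(r,f)$. Both are precisely the technical backbone of \cite{LS}; once imported, the Fourier–rescaling scheme above delivers Theorem~\ref{asym}, with the claimed uniform convergence on compacta following from the uniformity of the spectral estimates and the pointwise dominating bound on the rescaled integrand.
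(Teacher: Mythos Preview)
Your proposal follows essentially the same route as the paper: both rest on the two-parameter twisted transfer operator analysis carried out in \cite{LS}, and the paper's own proof is nothing more than a citation of \cite[Lemma~5.1]{LS} (stated there for indicator inputs $1_{[z_0]}$ and $1_{[-a/2,a/2]}$, in the form of upper and lower bounds with envelopes $F^{\pm}_{\epsilon}$) together with the remark that the general $\Phi\in\mathcal{L}(\Sigma^+)$, $u\in C_c(\mathbb{R})$ case is a verbatim repetition. Your identification of the spectral expansion of $\kappa(\underline\theta,\lambda)$ and the anisotropic rescaling $(\underline\theta_{\text{p}},\underline\theta_{\text{h}},\lambda)\mapsto(\underline s/t,\underline w/\sqrt t,\mu/t)$ is exactly the mechanism behind the exponent $p+h/2$; the only caveat is that the series $\sum_{n\ge0}L_{\underline\theta,\lambda}^n$ diverges at $(\underline 0,0)$, so the interchange and geometric summation as written are formal, and a rigorous implementation needs either a Laplace/contour argument \`a la Lalley or the careful estimates in \cite{LS} that you are in any case importing.
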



This is shown in \cite{LS} for special functions and the way they formulated the result is different than ours. To state their result, recall the decomposition $\mathbb{R}^d=\mathbb{E}_{\text{p}}\oplus \mathbb{E}_{\text{h}}$ defined in Section \ref{subspace}.
Define functions $F_{\text{p}}$ on $\mathbb{E}_{\text{p}}$ and $F_{\text{h}}$ on $\mathbb{E}_{\text{h}}$ by
\begin{align*}
F_{\text{p}}(\underline{\xi}_{\text{p}})=\frac{1}{(2\pi)^p}\int_{\mathbb{E}_{\text{p}}} e^{i\langle \underline{\xi}_{\text{p}},\underline{x}\rangle}d\underline{x},\\
F_{\text{h}}(\underline{\xi}_{\text{h}})=\frac{1}{(2\pi)^h}\int_{\mathbb{E}_{\text{h}}} e^{i\langle \underline{\xi}_{\text{h}},\underline{y}\rangle}d\underline{y}.
\end{align*}
Both functions are positive, uniformly continuous, and absolutely integrable. Set $F(\underline{\xi}_{\text{p}}+\underline{\xi}_{\text{h}})=F_{\text{p}}(\underline{\xi}_{\text{p}})F_{\text{h}}(\underline{\xi}_{\text{h}})$. For every $\epsilon>0$, one can construct two positive, uniformly continuous, bounded functions $F^{\pm}_{\epsilon}$  such that for all $\underline{\xi}(=\underline{\xi}_{\text{p}}+\underline{\xi}_{\text{h}})\in \mathbb{R}^d$ and $e^{-\epsilon}<t_1,t_2<e^{\epsilon}$,
\begin{equation*}
F^{-}_{\epsilon}(\un{\xi}_{\text{p}}+\un{\xi}_{\text{h}})\leq F(t_1\un{\xi}_{\text{p}}+t_2\un{\xi}_{\text{h}})\leq F^+_{\epsilon}(\un{\xi}_{\text{p}}+\un{\xi}_{\text{h}})
\end{equation*}
and such that $F^+_{\epsilon}/F^-_{\epsilon}\to1$ uniformly on compact sets as $\epsilon \to 0^+$. Let $1_{[z_0]}$ be the characteristic function of a cylinder set $[z_0]$ in $\Sigma^+$ and $1_{[-a/2,a/2]}$ be the characteristic function of an interval $[-a/2,a/2]$ in $\mathbb{R}$. It is shown in \cite[Lemma 5.1]{LS} that for any $\epsilon>0$, there exists $t_0>0$ such that for any $t>t_0$, $x\in \Sigma^+$ and $\un{\xi}\in \mathbb{Z}^d$, if $\un{\xi}=\un{\xi}_{\text{p}}+\un{\xi}_{\text{h}}$ where $\un{\xi}_{\text{p}}\in \mathbb{E}_{\text{p}}$ and $\un{\xi}_{\text{h}}\in \mathbb{E}_{\text{h}}$, then
\begin{align*}
& t^{p+h/2}\mathsf{Q}_t(1_{[z_0]} \otimes 1_{[-a/2,a/2]})(x,\underline{\xi})\leq e^{\epsilon} \left[F^+_{\epsilon}\left(\frac{\un{\xi}_{\text{p}}}{t}+\frac{\un{\xi}_{\text{h}}}{\sqrt{t}}\right)+\epsilon\right]\frac{\nu[z_0] a}{\int \tau d\nu} \psi(x);\\
& t^{p+h/2}\mathsf{Q}_t(1_{[z_0]} \otimes 1_{[-a/2,a/2]})(x,\underline{\xi})\geq e^{-\epsilon} \left[F^-_{\epsilon}\left(\frac{\un{\xi}_{\text{p}}}{t}+\frac{\un{\xi}_{\text{h}}}{\sqrt{t}}\right)+\epsilon\right]\frac{ \nu[z_0] a}{\int \tau d\nu} \psi(x).
\end{align*}
One of the key observations of the proof is that applying the Fourier inversion theorem to $1_{[-a/2,a/2]}$ and $\delta_{\underline{\xi}}$, one can express $\mathsf{Q}_t$ in terms of the sum of two-parameter twisted transfer operators $\sum_{n=0}^{\infty}L^n_{1-i\alpha,\underline{\theta}}(\psi 1_{[z_0]})$ with $(\alpha,\underline{\theta})\in \mathbb{R}\times \mathbb{T}^d$, where $L_{1-i\alpha, \underline{\theta}}(\psi 1_{[z_0]})(x)=\sum_{\sigma y=x}e^{-(1-i\alpha)\tau(y)+\langle f(y),\underline{\theta}\rangle}\psi(y) 1_{[z_0]}(y)$. The only singularity is the point $(0,\underline{0})\in \mathbb{R}\times \mathbb{T}^d$ and one carries out a detailed analysis on the asymptotic expansion of  $\sum_{n=0}^{\infty}L^n_{1-i\alpha,\underline{\theta}}(\psi 1_{[z_0]})$ at the singularity.

The statement of Theorem \ref{asym} for special functions then can be obtained using the properties of $F^{\pm}_{\epsilon}$. The argument of the proof of \cite[Lemma 5.1]{LS} also works for general functions so we obtain Theorem \ref{asym} for general functions.

\subsection{Correlation functions for $(\Gamma\backslash G, a_s, \mathsf{m}^{\operatorname{Haar}})$}
We write
\begin{align*}
&\tilde{X}:=\Sigma\times \mathbb{Z}^d\times \mathbb{R},\\
&\tilde{X}^+ :=\Sigma^+\times \mathbb{Z}^d\times \mathbb{R}.
\end{align*}
Consider the product measure on $\tilde{X}$:
\begin{equation*}
d\tilde{\mathsf{M}}:=\frac{\mathsf{m}_0}{\int \tau d\nu}(d\nu d\underline{\xi} ds).
\end{equation*}
The Haar measure $\mathsf{m}^{\operatorname{Haar}}$ on $\Gamma\backslash G$ corresponds to the measure on $\Sigma^{f,\tau}$ induced by $\tilde{\mathsf{M}}$.
For $\Psi_1,\Psi_2\in C_c(\tilde{\mathsf{X}}^+)$, define
\begin{equation*}
I_t(\Psi_1,\Psi_2):=\sum_{n=0}^{\infty} \int_{\tilde{\mathsf{X}}} \Psi_1\circ \tilde{\zeta}^n(x,\underline{\xi},s+t)\cdot \Psi_2(x,\underline{\xi},s)d\tilde{\mathsf{M}}(x,\underline{\xi},s)
\end{equation*}
where $\tilde{\zeta}^n(x,\underline{\xi},s):=(\sigma^nx, \underline{\xi}+f_n(x),s-r_n(x))$. It follows from the property of $r$ (Lemma \ref{subshift} (5)) that, for any $(x,\underline{\xi},s)\in \operatorname{supp} \Psi_2$,
$$\Psi_1\circ \tilde{\zeta}^n(x,\underline{\xi},s+t)=0$$
for $n$ large enough. So $I_t(\Psi_1,\Psi_2)$ is in fact a finite sum. 

\begin{defn}
Let $\mathcal{F}_0$ be the family of functions on $\tilde{X}^+$ of the form
\begin{equation*}
\Psi(x,\underline{\xi},s)=\Phi(x)\delta_{\underline{\xi_0}}(\underline{\xi})u(s)
\end{equation*}
where $\Phi\in \mathcal{L}(\Sigma^+)$, $u\in C_c(\mathbb{R})$ and $\underline{\xi_0}\in \mathbb{Z}^d$. Denote by $\mathcal{F}$ the space of functions which are finite linear combinations of functions from $\mathcal{F}_0$.
\end{defn}

\begin{lem}
\label{translate}
Let $\Psi_2(x,\underline{\xi},s)=\Phi(x)\delta_{\underline{\xi_0}}(\underline{\xi})u(s)\in \mathcal{F}_0$. Then for any $\Psi_1\in C_c(\tilde{\mathsf{X}}^+)$, we have
\begin{equation*}
I_t(\Psi_1,\Psi_2)=\frac{\mathsf{m}_0}{\int \tau d\nu}\int_{\tilde{\mathsf{X}}^+}\Psi_1(x,\underline{\xi},s)\cdot \mathsf{Q}_{t-s}(\Phi \otimes u)(x,\underline{\xi}-\underline{\xi_0})d\rho(x) d\underline{\xi} ds.
\end{equation*}
\end{lem}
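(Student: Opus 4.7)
The plan is to unfold the definition of $I_t(\Psi_1,\Psi_2)$, apply the Ruelle transfer-operator duality $\int L^n F\,d\rho=\int F\,d\rho$ to trade the variable $\sigma^n x$ for its $\sigma^n$-preimages, and recognize the resulting symbolic sum as $\mathsf{Q}_{t-s}(\Phi\otimes u)$.

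First I would substitute $\Psi_2(x,\xi,s)=\Phi(x)\delta_{\xi_0}(\xi)u(s)$ and the formula $\tilde\zeta^n(x,\xi,s)=(\sigma^n x,\xi+f_n(x),s-r_n(x))$ into the series defining $I_t$. The Kronecker delta collapses the sum over $\xi\in\mathbb{Z}^d$, and since $\Phi,r,f$ all depend only on nonnegative coordinates, Lemma~\ref{measure} lets me rewrite the integral against $\nu$ as an integral over $\Sigma^+$ against $d\nu^+=\psi\,d\rho$, giving
\[
I_t=\frac{\mathsf{m}_0}{\int\tau\,d\nu}\sum_{n=0}^\infty\int_{\Sigma^+\times\mathbb{R}}\Psi_1(\sigma^n x,\xi_0+f_n(x),s+t-r_n(x))(\Phi\psi)(x)u(s)\,d\rho(x)\,ds.
\]

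The heart of the argument is a two-variable identity for the transfer operator. For any bounded $\tilde\Psi(x',y)$, set $G(x):=\tilde\Psi(\sigma^n x,x)$. Then
\[
L^n G(x')=\sum_{\sigma^n y=x'}e^{-r_n(y)}\tilde\Psi(\sigma^n y,y)=\sum_{\sigma^n y=x'}e^{-r_n(y)}\tilde\Psi(x',y),
\]
and the invariance $\rho(L^n G)=\rho(G)$ reads
\[
\int_{\Sigma^+}\tilde\Psi(\sigma^n x,x)\,d\rho(x)=\int_{\Sigma^+}\sum_{\sigma^n y=x'}e^{-r_n(y)}\tilde\Psi(x',y)\,d\rho(x').
\]
I would apply this with $\tilde\Psi(x',y)=\Psi_1(x',\xi_0+f_n(y),s+t-r_n(y))(\Phi\psi)(y)$ for each fixed $s$, converting the $n$-th summand into a sum over $\sigma^n$-preimages of $x'$.

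Next I would insert the trivial partition $\sum_{\xi\in\mathbb{Z}^d}\delta_{\xi-\xi_0}(f_n(y))=1$ to expose a sum in $\xi$, and make the change of variables $s'=s+t-r_n(y)$ in the inner $ds$ integral (for fixed $n$ and $y$), so that $u(s)=u(r_n(y)-(t-s'))$. Pulling $\Psi_1(x',\xi,s')$ outside the sums over $n$ and preimages, the remaining bracketed expression
\[
\sum_{n=0}^\infty\sum_{\sigma^n y=x'}e^{-r_n(y)}(\Phi\psi)(y)\delta_{\xi-\xi_0}(f_n(y))u(r_n(y)-(t-s'))
\]
is, by definition, $\mathsf{Q}_{t-s'}(\Phi\otimes u)(x',\xi-\xi_0)$, and after renaming $(x',s')$ to $(x,s)$ this yields the claimed identity.

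The interchanges of sums and integrals are not delicate: since $\Psi_1\in C_c(\tilde X^+)$ its support is bounded in $\xi$ and in the real coordinate, and Lemma~\ref{subshift}(5) forces $r_n(y)\geq C$ for $n\geq K$, so for each $(x,s)\in\supp\Psi_2$ only finitely many $n$ contribute a nonzero term. The whole manipulation is therefore a finite rearrangement, and no absolute-convergence estimate is needed. The one conceptual step I expect to be the main thing to articulate cleanly is the two-variable transfer-operator identity above; once that is in place the remainder is essentially bookkeeping.
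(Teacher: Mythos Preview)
Your proposal is correct and follows essentially the same route as the paper: the paper's proof also reduces to the transfer-operator duality $\int_{\Sigma^+} F\circ\sigma\cdot G\,d\rho=\int_{\Sigma^+} F\cdot LG\,d\rho$ together with Fubini, and then identifies the resulting inner sum as $\mathsf{Q}_{t-s}(\Phi\otimes u)(x,\xi-\xi_0)$. Your ``two-variable'' formulation of the duality is in fact a slightly more honest statement of what is needed, since the integrand is not literally of the form $F(\sigma^n x)G(x)$ (the arguments $f_n(x)$ and $r_n(x)$ inside $\Psi_1$ depend on $x$, not on $\sigma^n x$); the paper states the product version and applies it without comment.
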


\begin{proof}
Since $d\rho$ is an eigenmeasure of $L$ with eigenvalue $1$, for any $F,G\in \mathcal{L}(\Sigma^+)$, we have the following equality
\begin{equation*}
\int_{\Sigma^+} F\circ \sigma \cdot G d\rho= \int_{\Sigma^+} L(F\circ \sigma \cdot G) d\rho=\int_{\Sigma^+} F\cdot (L G)d\rho.
\end{equation*}
Using this and the Fubini theorem, we obtain
\begin{align*}
&I_t(\Psi_1,\Psi_2)\\
=&\int_{\tilde{\mathsf{X}}^+}\Psi_1(x,\underline{\xi},s)\sum_{n=0}^{\infty}\sum_{\sigma^n y=x}e^{-r_n(y)} (\Phi\cdot \psi)(y)\delta_{\underline{\xi_0}}(\underline{\xi}-f_n(y)) u(s-t+r_n(y))d\rho(x) d\underline{\xi} ds\\
=& \frac{\mathsf{m}_0}{\int \tau d\nu}\int_{\tilde{\mathsf{X}}^+}\Psi_1(x,\underline{\xi},s)\mathsf{Q}_{t-s}(\Phi\otimes u)(x,\underline{\xi}-\underline{\xi_0})d\rho(x)d\underline{\xi} ds.
\end{align*}
\end{proof}

\begin{proof}[Proof of Theorem \ref{main thm}]
Without loss of generality, we may suppose that $\psi_1$ and $\psi_2$ are defined on the suspension space $\Sigma^{f,\tau}$, continuous and compactly supported. For each $i=1,2$, let $\tilde{\Psi}_i\in C_c(\tilde{\mathsf{X}})$ be the lift of $\psi_i$ to $\tilde{\mathsf{X}}$ satisfying
\begin{equation*}
\psi_i[(x,\underline{\xi},s)]=\sum_{n\in \mathbb{Z}} \tilde{\Psi}_i\circ \zeta^n(x,\underline{\xi},s),
\end{equation*}
with $\zeta(x,\underline{\xi},s):=(\sigma x,\underline{\xi}+f(x),s-\tau(x))$.

Using the unfolding, 
\begin{align}
\label{eq 1}
&\int_{\Gamma\backslash G}\psi_1(xa_t)\psi_2(x)d\mathsf{m}^{\operatorname{Haar}}(x)\nonumber\\
=&\sum_{n\in\mathbb{Z}} \int_{\tilde{\mathsf{X}}}\tilde{\Psi}_1\circ \zeta^n(x,\underline{\xi},s+t)\cdot \tilde{\Psi}_2(x,\underline{\xi},s)d\tilde{\mathsf{M}}\nonumber\\
=&\sum_{n=0}^{\infty} \int_{\tilde{\mathsf{X}}} \tilde{\Psi}_1\circ \zeta^n(x,\underline{\xi},s+t)\cdot \tilde{\Psi}_2(x,\underline{\xi},s)d\tilde{\mathsf{M}}\nonumber\\
+&\sum_{n=1}^{\infty}\int_{\tilde{\mathsf{X}}} \tilde{\Psi}_1\circ \zeta^{-n}(x,\underline{\xi},s+t)\cdot \tilde{\Psi}_2(x,\underline{\xi},s)d\tilde{\mathsf{M}}.\nonumber
\end{align}
For the second term in the above equation, note that for $(x,\xi,s)\in \operatorname{supp}(\tilde{\Psi}_2)$,
\begin{equation*}
\tilde{\Psi}_1\circ \zeta^{-n}(x,\underline{\xi},s+t)=\tilde{\Psi}_1(\sigma^{-n}x,\underline{\xi}-f_{-n}(x),s+t+\tau_{-n}(x)),
\end{equation*}
where $f_{-n}(x)=\sum_{k=1}^{n}f(\sigma^{-k}x)$ and $\tau_{-n}(x)=\sum_{k=1}^{n}\tau (\sigma^{-k}x)$. It tends to $0$ when $t$ is large enough, as $\tau_n(x)>0$. Define
\begin{equation*}
\Psi_i(x,\underline{\xi},s):=\tilde{\Psi}_i(x,\underline{\xi},s-h(x))\,\,\,\text{for}\,\,i=1,2.
\end{equation*}
As $\tilde{\Psi}_i$ is compactly supported, the new function $\Psi_i$ is also a continuous function with compact support. Moreover, we have $\tilde{\mathsf{M}}(\Psi_i)=\tilde{\mathsf{M}}(\tilde{\Psi}_i)$. The first term of the last equation above equals $I_t(\Psi_1,\Psi_2)$. 
 Theorem \ref{main thm} follows if this following holds
\begin{equation}
\label{case}
\lim_{t\to \infty} t^{p+h/2} I_t(\Psi_1,\Psi_2)=\mathsf{c}\,\tilde{\mathsf{M}}(\Psi_1)\tilde{\mathsf{M}}(\Psi_2).
\end{equation}

The proof of (\ref{case}) is divided into the following three cases.

\textbf{\textit{Case 1}}. Assume $\Psi_1\in C_c(\tilde{\mathsf{X}}^+)$ and $\Psi_2\in \mathcal{F}$. It suffices to consider the case where
\begin{equation*}
\Psi_2(x,\underline{\xi},s)=\Phi(x)\delta_{\underline{\xi_0}}(\underline{\xi})u(s)\in \mathcal{F}_0.
\end{equation*}
By Lemma \ref{translate},
\begin{equation*}
I_t(\Psi_1,\Psi_2)=\frac{\mathsf{m}_0}{\int \tau d\nu}\int_{\tilde{\mathsf{X}}^+} \Psi_1(x,\underline{\xi},s)\cdot \mathsf{Q}_{t-s}(\Phi\otimes u)(x,\underline{\xi}-\underline{\xi_0})d\rho(x)d\underline{\xi} ds.
\end{equation*}
Applying Theorem \ref{asym} to the integrand, we obtain
\begin{align*}
&\lim_{t\to \infty} t^{p+h/2} \Psi_1(x,\underline{\xi},s)\cdot \mathsf{Q}_{t-s}(\Phi\otimes u)(x,\underline{\xi}-\underline{\xi_0})\\
=&\frac{\mathsf{c}}{\int \tau d\nu} \tilde{\mathsf{M}}(\Psi_2)\Psi_1(x,\underline{\xi},s)\psi(x).
\end{align*}
As the above convergence is uniform on compact sets, we have
\begin{equation*}
\lim_{t\to \infty}t^{p+h/2} I_t(\Psi_1,\Psi_2)=\mathsf{c}\, \tilde{\mathsf{M}}(\Psi_1) \tilde{\mathsf{M}}(\Psi_2).
\end{equation*}

\textbf{\textit{Case 2}}. Assume $\Psi_1,\Psi_2\in C_c(\tilde{\mathsf{X}}^+)$. Approximate $\Psi_2$ by functions in $\mathcal{F}$: it follows from the Stone-Weierstrass theorem that for any $\epsilon>0$, there exists $F_2\in \mathcal{F}$ such that for any $(x,\underline{\xi},s)\in \tilde{\mathsf{X}}^+$
$$ |\Psi_2(x,\underline{\xi},s)-F_2(x,\underline{\xi},s)|<\epsilon.$$ 
We can find $w_2\in \mathcal{F}$ satisfying
\begin{itemize}
\item  $ |\Psi_2(x,\underline{\xi},s)-F_2(x,\underline{\xi},s)|<\epsilon \,w_2(x,\underline{\xi},s)$  for any $(x,\underline{\xi},s)\in \tilde{\mathsf{X}}^+$;

\item $\tilde{\mathsf{M}}(w_2)$ is bounded by the size of the $\mathbb{Z}^d\times \mathbb{R}$-support of $\Psi_2$.
\end{itemize}
As (\ref{case}) holds for the pairs $(\Psi_1,F_2),(\Psi_1,w_2)$ and $\epsilon$ is arbitrary, it also holds for the pair $(\Psi_1,\Psi_2)$.

\textbf{\textit{Case 3}}. Let $\Psi_1,\Psi_2\in C_c(\tilde{\mathsf{X}})$. For any $\epsilon>0$, there exist $k\in \mathbb{N}$ and $F_i\in C_c(\tilde{\mathsf{X}}^+)$ such that
$$\lvert \Psi_i\circ \tilde{\zeta}^k(x,\underline{\xi},t)-F_i(x,\underline{\xi}+f_k(x),t-r_k(x))\rvert<\epsilon. $$
We can find $w_i\in \mathcal{F}$ satisfying
\begin{itemize}
\item $\lvert \Psi_i\circ \tilde{\zeta}^k(x,\underline{\xi},t)-F_i(x,\underline{\xi}+f_k(x),t-r_k(x))\rvert<\epsilon \cdot w_i(x,\underline{\xi}+f_k(x),s-r_k(x))$;
\item $\tilde{\mathsf{M}}(\lvert w_i\rvert)$ are bounded by the size of $\mathbb{Z}^d\times \mathbb{R}$-support of $\Psi_i$.
\end{itemize}
Since $\tilde{\mathsf{M}}$ is invariant under the action of $\tilde{\zeta}$, we can replace $\Psi_i$ by $\Psi_i\circ \tilde{\zeta}^k$ in (\ref{case}). For (\ref{case}) holds for the pairs $(F_i,F_j), (F_i,w_j),(w_i,w_j)$ with $i,j\in\{1,2\}$, it is valid for the pair $(\Psi_1\circ \tilde{\zeta}^k, \Psi_2\circ \tilde{\zeta}^k)$.

\end{proof}

 
\end{document}